\documentclass{article}
\usepackage{amsmath,amsthm,amssymb,amscd}



\newcommand{\range}{\mathrm{Range}}

\newcommand{\norm}[1]{\left\lVert #1 \right\rVert}

\def\restrict{\upharpoonright}


\newcommand{\bbR}{\mathbb{R}}





\newcommand{\dom}{\mathrm{Dom}}

\newtheorem{theorem}{Theorem}[section]
\newtheorem{lemma}[theorem]{Lemma}

\theoremstyle{definition}


\title{L\'{e}vy-Steinitz for countable sets of series}

\author{
Paul Larson
\thanks{Partially supported by NSF grant DMS-1201494. We thank Vladimir Kadets for brining Troyanski's paper to our attention.}
\\
Miami University\\
}

\begin{document}

\maketitle

\begin{abstract}
  The L\'{e}vy-Steinitz theorem characterizes the values that a conditionally convergent sequence in $\bbR^{n}$ can attain under permutations. We use material from \cite{rr} to extend this analysis to sequences in $\bbR^{\omega}$, under pointwise convergence, reproving a theorem of Stanimir Troyanski \cite{Troyanski}. 
\end{abstract}

It is shown in \cite{rr} that there exists a c.c.c. partial order adding a permutation of $\omega$ making every conditionally convergent real series in the ground model converge to a value not in the ground model. Applying this forcing fact to a countable elementary submodel of a sufficiently large fragment of the universe, one gets the following fact : for any countable set $S$ of conditionally convergent real series, and every countable $X \subseteq \bbR$, there is a permutation of $\omega$ making each member of $S$ converge to a real number not in $X$. In this note we give a more direct proof of this fact, using the same machinery. The resulting theorem (due to Stanimir Troyanski \cite{Troyanski}) is an extension of the L\'{e}vy-Steinitz theorem (which characterizes the values that a finite set of series can take under permutations) to countable sets of series. The proof uses the original L\'{e}vy-Steinitz theorem, as well as the Polygonal Refinement Theorem, which is used in the original proof of L\'{e}vy-Steinitz theorem.  It is a simplified version of the proof of the theorem from \cite{rr} mentioned above.

We emphasize that our extension of the L\'{e}vy-Steinitz theorem  applies to $\bbR^{\omega}$ under pointwise converge. Corollary 7.2.2 of \cite{KadetsKadets} says that in each infinite-dimensional Banach space there is a series attaining exactly two values under rearrangements.

\section{Preliminaries}\label{prelimsec}

We start with some material taken from \cite{Rosenthal}, as rewritten in \cite{rr}.

Given a sequence $\bar{a} = \langle a^{i} : i < d \rangle$ consisting of real-values series (for some $d \in \omega$), we let
$K(\bar{a})$ be the set of $\langle s_{i} : i < d\rangle \in \bbR^{d}$ for which the series
$\sum_{i\in d}s_{i}a^{i}$ is absolutely convergent. We let $R(\bar{a})$ be the orthogonal complement of $K(\bar{a})$ (i.e., the set of
vectors in $\bbR^{d}$ orthogonal to every element of $K(\bar{a})$). The sets $K(\bar{a})$ and $R(\bar{a})$ are each linear subspaces of $\bbR^{d}$,
and their dimensions sum to $d$.

We say that a set $I$ consisting of conditionally convergent series is \emph{independent} if $K(\bar{a}) = \{\bf{0}\}$ for each finite sequence $\bar{a}$ from $I$. We let $S(\bar{a})$ be the set of values in $\bbR^{d}$ of the form $\sum_{n \in \omega} \langle a^{i}_{p(n)} : i < d \rangle$ for $p$ a permutation of $\omega$.

The following theorem from 1913 is due to L\'{e}vy and Steinitz (see \cite{BonetDefant, KadetsKadets, Rosenthal}).

\begin{theorem}[L\'{e}vy-Steinitz]
If $\bar{a} = \langle a^{i} : i < d \rangle$ is a finite sequence of conditionally convergent real-valued series,
then \[S(\bar{a}) = \{ \langle \sum a^{i} : i < d \rangle + \bar{x} : \bar{x} \in R(\bar{a})\}.\]
\end{theorem}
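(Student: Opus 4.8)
The plan is to prove the two inclusions separately, after recording the geometric reformulation. Writing $v_{n} = \langle a^{i}_{n} : i < d \rangle \in \bbR^{d}$ for the vector of $n$-th terms and $v_{0} = \langle \sum a^{i} : i < d \rangle$ for the coordinatewise sum, a vector $\bar{s} = \langle s_{i} : i < d \rangle$ lies in $K(\bar{a})$ exactly when $\sum_{n} | \langle \bar{s}, v_{n} \rangle | < \infty$, where $\langle \bar{s}, v \rangle = \sum_{i<d} s_{i} v_{i}$ is the usual inner product; thus $R(\bar{a}) = K(\bar{a})^{\perp}$ is the set of vectors orthogonal to every such $\bar{s}$. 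Each element of $S(\bar{a})$ is a sum $w = \sum_{n} v_{p(n)}$ for a permutation $p$ whose rearranged series converges, and the target set is the affine subspace $v_{0} + R(\bar{a})$.

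For the inclusion $S(\bar{a}) \subseteq v_{0} + R(\bar{a})$ I would argue as follows. Fix $w = \sum_{n} v_{p(n)} \in S(\bar{a})$ and any $\bar{s} \in K(\bar{a})$. The scalar series $\sum_{n} \langle \bar{s}, v_{n} \rangle$ is absolutely convergent, hence unconditionally convergent, so reordering its terms by $p$ leaves the sum unchanged; interchanging the finite inner-product sum with the convergent series over $n$ then yields $\langle \bar{s}, w \rangle = \sum_{n} \langle \bar{s}, v_{p(n)} \rangle = \sum_{n} \langle \bar{s}, v_{n} \rangle = \langle \bar{s}, v_{0} \rangle$. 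Hence $w - v_{0}$ is orthogonal to $\bar{s}$, and since $\bar{s} \in K(\bar{a})$ was arbitrary, $w - v_{0} \in R(\bar{a})$. This direction is routine, resting only on the permutation-invariance of absolutely convergent series.

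The substantive inclusion is $v_{0} + R(\bar{a}) \subseteq S(\bar{a})$. Fix a target $w = v_{0} + y$ with $y \in R(\bar{a})$; I must produce a permutation $p$ with $\sum_{n} v_{p(n)} = w$. The plan is to build $p$ in successive finite blocks: having enumerated a finite initial set of terms whose partial sum lies close to a point on the straight segment toward $w$, I select a larger finite block and, using the Polygonal Refinement (Steinitz) Theorem, reorder the new terms so that every intermediate partial sum stays inside a tube of radius at most $C_{d}$ times the maximal norm of a term in the block around the segment joining the old partial sum to the new one. Because the terms tend to $0$, the block maximal norm, and hence the tube radius, shrinks to $0$, so controlling the block endpoints controls convergence. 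The orthogonality hypothesis $y \perp K(\bar{a})$ is precisely what lets the block endpoints be steered to $w$: along directions in $K(\bar{a})$ the series is absolutely convergent and therefore rigid, whereas along $R(\bar{a})$ the conditional convergence supplies enough freedom, through regrouping of positive and negative contributions, to realize any prescribed drift.

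The main obstacle is this last reachability claim, that is, converting the algebraic condition $y \in K(\bar{a})^{\perp}$ into the combinatorial statement that the block sums can be arranged to converge to $v_{0} + y$. This is exactly the content supplied by the Polygonal Refinement Theorem together with a convexity argument identifying the closed set of achievable drifts with $R(\bar{a})$; the dimension-dependent bound on partial sums is what keeps the construction convergent rather than merely bounded. The easy direction above then certifies that no further drifts occur, so the two inclusions coincide and give the stated equality.
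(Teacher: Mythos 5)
A preliminary remark: the paper does not prove this theorem at all --- it is quoted as a classical result with references to Rosenthal, Kadets--Kadets, and Bonet--Defant --- so your attempt can only be measured against the standard proof (e.g.\ Rosenthal's exposition), which is also the proof whose machinery the paper reuses. Your easy inclusion $S(\bar{a}) \subseteq \langle \sum a^{i} : i < d\rangle + R(\bar{a})$ is complete and correct: absolute convergence of $\sum_{n} \langle \bar{s}, v_{n}\rangle$ for $\bar{s} \in K(\bar{a})$ makes that scalar series permutation-invariant, and the interchange of the finite inner product with the limit is legitimate.

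The hard inclusion, however, has a genuine gap at exactly the point where the theorem's real content lies. Your block construction with the Polygonal Confinement Theorem correctly handles \emph{convergence control}: once you know the block endpoints can be driven to $w$, confinement plus $v_{n} \to 0$ keeps the intermediate partial sums in shrinking tubes, and that part of your sketch matches the standard argument (and the paper's Lemma \ref{pctrest} is the form needed for blocks whose sum is small but nonzero). But the claim that the endpoints \emph{can} be so driven --- what you call the ``reachability claim'' --- is asserted, not proved. The sentence ``along $R(\bar{a})$ the conditional convergence supplies enough freedom \ldots{} to realize any prescribed drift'' is a restatement of the theorem, and the Polygonal Confinement Theorem cannot supply it: confinement says nothing about which endpoints are attainable by finite subsums of a tail, only about ordering the terms between two given endpoints. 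The standard proof fills this hole with two further ingredients you do not supply. First, a second combinatorial lemma (``rounding of coefficients''): if $v = \sum_{i \in F} \lambda_{i} v_{i}$ with all $\lambda_{i} \in [0,1]$, then some subset $A \subseteq F$ satisfies $\norm{\sum_{i \in A} v_{i} - v} \leq d \cdot \max_{i}\norm{v_{i}}$, so that convex combinations of tail terms can be replaced by honest subsums with error tending to $0$. Second, a duality/separation argument showing the set of approximable drifts from every tail is exactly $K(\bar{a})^{\perp}$: if $y$ could not be approximated, a separating functional $\bar{s}$ would bound $\langle \bar{s}, \cdot\rangle$ on all tail subsums, forcing $\sum_{n} \left| \langle \bar{s}, v_{n} \rangle \right| < \infty$, i.e.\ $\bar{s} \in K(\bar{a})$, contradicting $\langle \bar{s}, y\rangle \neq 0$ for $y \in R(\bar{a})$. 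Your phrase ``a convexity argument identifying the closed set of achievable drifts with $R(\bar{a})$'' gestures at this, but naming the conclusion is not the same as proving it; as written, the proposal would not go through.
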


One way to interpret the L\'{e}vy-Steinitz Theorem is to note that in the case where $\langle a^{i} : i < d \rangle$ is independent, it says that every value in $\bbR^{n}$ is attainable under some rearragement. If $\langle a^{i} : i < d \rangle$ is an arbitrary sequence of conditionally convergent real series,
then there exists a set $s \subseteq d$ such that $\{ a^{i} : i \in s \}$ is independent, and such that, for each $j \in d \setminus s$
there exist scalars $k_{i}$ $(i \in s)$, not all $0$ such that $\sum_{i \in s}k_{i}a^{i} + a^{j}$ is absolutely convergent. Given any permutation of $\omega$, then, the value of such an $a^{j}$ is determined by the values of $a^{i}$ $(i \in s)$. We carry out the version of this analysis for countable sets in the final section of this paper.

The following is the key lemma in the proof of the L\'{e}vy-Steinitz theorem (see \cite{Cohen, Rosenthal}).

\begin{theorem}[The Polygonal Confinement Theorem; Steinitz]
For each positive integer $d$ there
exists a constant $C_{d}$ such that for each positive $n \in \omega$ and all vectors $v_{m}$ ($m \in n$) from
$\bbR^{d}$,
if
\[\sum_{m \in n} v_{m} = 0\] and
$\norm{v_{m}} \leq 1$ for all $m \in n$,
then there is a permutation $p$ of $n \setminus \{0\}$ such that
\[\norm{
v_{0} +
\sum_{m \in k \setminus \{0\}}
v_{p(m)}
}\leq C_{d}\]
for every $m \in n + 1$.
\end{theorem}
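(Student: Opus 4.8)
The plan is to prove the ``sum-zero, all partial sums small'' core of the statement, namely that the $n$ vectors can be reordered so that every partial sum has norm at most $d$, and then to absorb the fixed initial term $v_0$ into the constant. I would construct the ordering \emph{backwards}, from the last position to the first. Concretely I would produce a decreasing chain of index sets $J_n \supset J_{n-1} \supset \cdots \supset J_d$ with $|J_k| = k$, placing the unique vector indexed by $J_k \setminus J_{k-1}$ into position $k$ (and distributing the $d$ surviving indices of $J_d$ among the first $d$ positions arbitrarily). With this bookkeeping the partial sum through position $k$ is exactly $\sum_{i \in J_k} v_i$, and at $k = n$ this is the total sum $0$.

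The engine is a rounding lemma attached to each $J_k$: alongside the chain I would carry weights $\mu_i \in [0,1]$ for $i \in J_k$ satisfying $\sum_{i \in J_k} \mu_i = k - d$ and $\sum_{i \in J_k} \mu_i v_i = 0$. Granting these weights, the bound is immediate, since $\sum_{i \in J_k} v_i = \sum_{i \in J_k}(1 - \mu_i)v_i$, where the coefficients $1 - \mu_i$ lie in $[0,1]$ and sum to $d$, so $\norm{\sum_{i \in J_k} v_i} \le \sum_{i \in J_k}(1-\mu_i)\norm{v_i} \le d$. To manufacture the weights I would consider, for each $k$, the polytope of $\nu \in [0,1]^{J_k}$ satisfying the $d+1$ linear equations $\sum_i \nu_i = k-1-d$ and $\sum_i \nu_i v_i = 0$; it is nonempty because scaling the weight vector at level $k$ by the factor $\tfrac{k-1-d}{k-d}$ lands inside it, with the uniform point $\mu_i \equiv \tfrac{n-d}{n}$ seeding the recursion at $k = n$. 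I would then pass to a vertex: since the polytope is cut out by only $d+1$ equalities inside the cube $[0,1]^{J_k}$, any vertex has at most $d+1$ fractional coordinates, hence at least $|J_k| - d - 1$ coordinates lying in $\{0,1\}$.

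The crux, and the step I expect to be the main obstacle, is showing that such a vertex must have a coordinate equal to $0$, so that deleting the corresponding index yields $J_{k-1}$ while preserving \emph{both} equalities exactly. This is a short counting argument: if every coordinate were strictly positive, then every integral coordinate would equal $1$, and writing $f \le d+1$ for the number of fractional coordinates and $k - f$ for the number of $1$'s, the constraint $\sum_i \nu_i = k - d - 1$ would force the fractional coordinates to sum to $f - d - 1$; but their sum lies strictly between $0$ and $f$, forcing $f > d+1$, a contradiction. The genuinely delicate accounting is making the target sums and the scaling factors line up so that the polytopes remain nonempty, and a zero coordinate is guaranteed, at every stage of the recursion down to $k = d$.

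Finally I would dispatch the fixed first term. The same construction handles vectors with a nonzero total $S = \sum_i w_i$, $\norm{w_i}\le 1$, merely by setting the vector target at level $k$ to $\tfrac{k-d}{m}S$ in place of $0$; the counting above is untouched, since it only involves the scalar constraint $\sum_i \nu_i = k-d$, and one obtains an ordering with $\norm{\sum_{j \le k} w_{\sigma(j)} - \tfrac{k}{m}S} \le 2d$ for every $k$. Applying this to $w = \langle v_1,\dots,v_{n-1}\rangle$, whose sum is $S = -v_0$ with $\norm{S}\le 1$, gives, for each partial sum,
\[
\norm{v_0 + \sum_{j \le k} v_{\sigma(j)}} \le \norm{v_0}\Bigl(1 - \tfrac{k}{n-1}\Bigr) + 2d \le 2d + 1 .
\]
Thus $C_d = 2d+1$ works, which is all the theorem requires (the constant can be sharpened to $d$ with more care, but that is inessential here).
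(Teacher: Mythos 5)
Your proof is correct, but it is a genuinely different argument from the one the paper relies on: the paper quotes the Polygonal Confinement Theorem as a black box, citing Rosenthal and Cohen, where it is established by induction on the dimension $d$ (one projects the vectors onto a hyperplane, confines the projections using $C_{d-1}$, and then interleaves to control the remaining coordinate), an argument that is geometrically transparent but produces rapidly growing constants. What you give instead is the linear-algebraic proof of the Steinitz lemma in the style of Grinberg and Sevast'yanov: the chain $J_n \supset \cdots \supset J_d$ carrying weights $\mu_i \in [0,1]$ with $\sum_{i \in J_k}\mu_i = k-d$ and $\sum_{i\in J_k}\mu_i v_i = 0$, the passage to a vertex of the polytope cut out by $d+1$ equalities inside the cube (hence at most $d+1$ fractional coordinates), and your counting argument for a vanishing coordinate are exactly right: if no coordinate were $0$, the $f \le d+1$ fractional coordinates would have to sum to $f-d-1$, which forces $f \ge d+2$, a contradiction, and the scaled point $\frac{k-1-d}{k-d}\mu$ correctly witnesses nonemptiness at each stage. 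Your reduction of the fixed initial vector is also sound: since the zero-coordinate counting uses only the scalar constraint, replacing the vector target by $\frac{k-d}{m}S$ costs only the additional error $\frac{d}{m}\norm{S} \le d$, and with $S = -v_0$, $m = n-1$, the partial sums become $\bigl(1-\frac{k}{n-1}\bigr)v_0 + E_k$ with $\norm{E_k} \le 2d$, giving $C_d = 2d+1$ (the positions $k < d$ and the degenerate case $n-1 < d$ are covered by the trivial bound $\le d+1$, which you implicitly use). The trade-off between the two routes: yours is self-contained linear algebra and yields an essentially optimal linear constant, while the classical induction-on-dimension proof cited by the paper is shorter to state and geometrically motivated but gives much worse constants; for the paper's purposes only the existence of some nondecreasing sequence $C_d$ matters, and $2d+1$ certainly qualifies.
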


The following immediate (and standard) consequence of the Polygonal Confinement Theorem is proved in \cite{rr}.

\begin{lemma}\label{pctrest}
Let $m$ and $d$ be positive integers, let $\rho$ be a positive real number and let $b$ and $v_{i}$ ($i \in m$) be elements of
$\bbR^{d}$.
Suppose that
\[\sum_{i \in m} v_{i} = b,\]
$\norm{b} \leq \rho$ and $\norm{v_{i}} \leq \rho$ for all $i \in m$.
Then there is a permutation $p$ of $m \setminus \{0\}$ such that
\[\norm{
v_{0} +
\sum_{i \in j \setminus \{0\}}
v_{p(i)}
}\leq \rho C_{d} + \norm{b}\]
for every $j \in m + 1$.
\end{lemma}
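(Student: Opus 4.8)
The plan is to reduce Lemma~\ref{pctrest} to the Polygonal Confinement Theorem by rescaling and then appending a single auxiliary vector that absorbs the nonzero sum $b$. First I would normalize: set $u_{i} = v_{i}/\rho$ for each $i \in m$, so that $\norm{u_{i}} \leq 1$ and $\sum_{i \in m} u_{i} = b/\rho$. Since the Polygonal Confinement Theorem requires the vectors to sum to $0$, I would introduce an extra vector $u_{m} = -b/\rho$ indexed by the new index $m$; because $\norm{b} \leq \rho$ we have $\norm{u_{m}} \leq 1$, and now $\sum_{i \in m+1} u_{i} = 0$.

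Next I would apply the Polygonal Confinement Theorem to the family $\langle u_{i} : i \in m+1 \rangle$, with the scaled copy $u_{0} = v_{0}/\rho$ playing the role of the distinguished first vector (which the theorem keeps in place). This yields a permutation $q$ of $(m+1) \setminus \{0\}$ such that every partial sum $u_{0} + \sum_{i \in k \setminus \{0\}} u_{q(i)}$ has norm at most $C_{d}$. I would then define the desired permutation $p$ of $m \setminus \{0\}$ by deleting the auxiliary index $m$ from the ordering given by $q$; that is, $p$ lists the indices $1, \dots, m-1$ in the same relative order in which $q$ lists them, skipping over $m$.

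It remains to bound the partial sums of the original vectors under $p$, and this is where the one genuine point of care lies: the auxiliary vector $u_{m}$ sits at some position $t$ in the $q$-ordering, and a given $p$-partial sum either precedes or follows that position. If $u_{m}$ has not yet occurred, the initial segment of $p$-indices coincides with the corresponding initial segment of $q$-indices, so the scaled $p$-partial sum equals a $q$-partial sum exactly; hence its norm is at most $C_{d}$ and $\norm{v_{0} + \sum_{i \in j \setminus \{0\}} v_{p(i)}} \leq \rho C_{d}$. If instead $u_{m}$ has already occurred by that point, then the scaled $p$-partial sum is obtained from the corresponding $q$-partial sum by removing $u_{m} = -b/\rho$, so it differs from a vector of norm $\leq C_{d}$ by exactly $b/\rho$; multiplying back through by $\rho$ gives the bound $\rho C_{d} + \norm{b}$. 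Both cases respect the claimed estimate, so the case analysis finishes the argument.

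I expect the only real obstacle to be this bookkeeping with the auxiliary index, namely verifying that the two cases together cover every $j \in m+1$ and that the indices align correctly after the deletion, since everything else is a mechanical rescaling. The degenerate possibilities ($b = 0$, in which case $u_{m} = 0$, and $m = 1$, in which case there is nothing to permute) introduce no difficulty and are absorbed into the same argument.
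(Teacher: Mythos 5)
Your proposal is correct and takes the standard route: the paper itself gives no proof of Lemma \ref{pctrest} (it defers to \cite{rr}), and the argument used there is exactly yours --- rescale by $\rho$, append the auxiliary vector $-b/\rho$ (legitimate since $\norm{b}\leq\rho$) to make the sum vanish, apply the Polygonal Confinement Theorem with $v_{0}/\rho$ as the fixed first vector, and then delete the auxiliary vector, noting that each partial sum is either an exact confinement partial sum or differs from one by $b/\rho$, yielding the bound $\rho C_{d} + \norm{b}$ after scaling back. Your two-case bookkeeping (including the degenerate cases) is exactly the needed verification, so there is nothing to add.
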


\section{Countable independent sets}

We prove in this section the version of the L\'{e}vy-Steinitz theorem for countable independent sets. The proof is an adaptation of arguments from \cite{rr}. The general version is proved in the next section.

\begin{theorem}\label{indthrm} Let $\langle a^{i} : i < \omega \rangle$ be an independent sequence of conditionally convergent real series and let
$\langle x_{i} : i < \omega \rangle$ be a sequence of real numbers. Then there is a permutation $p$ of $\omega$ such that, for each $i \in \omega$, $\sum_{j \in \omega} a^{i}_{p(j)} = x_{i}$.
\end{theorem}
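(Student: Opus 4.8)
The plan is to build $p$ as an increasing union of finite blocks, deciding the values $p(0),p(1),\dots$ in order, so that the domain exhausts $\omega$ automatically and injectivity is free as long as each block uses only indices not used before. For a dimension counter I set $d_k=k$ and write $\vec a_n=\langle a^i_n:i<d_k\rangle\in\bbR^{d_k}$ for the truncated term vectors at stage $k$. Two facts drive the argument. First, since the sequence is independent, $K(\vec a)=\{\mathbf 0\}$ for every finite truncation, so $R(\vec a)=\bbR^{d_k}$ and the L\'evy--Steinitz theorem gives $S(\vec a)=\bbR^{d_k}$; moreover this remains true after deleting any finite set of terms or any initial segment of indices, since that alters neither $K$ nor $R$. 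Second, I will use the Polygonal Confinement Theorem in a ``drift'' form obtained by centering, described below.

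Maintaining the induction: after stage $k$ I will have committed $p$ on an initial segment $[0,m_k)$, mapping it bijectively onto a finite set $F_k\supseteq\{0,\dots,k-1\}$, with the checkpoint estimate $\norm{\sum_{j<m_k}\vec a_{p(j)}-\langle x_i:i<d_k\rangle}<\epsilon_k$, where $\epsilon_k\downarrow 0$ is fixed in advance. To pass to stage $k+1$ I first append the single least index $\ell$ not in $F_k$; this is the only device needed for surjectivity, and since $\ell$ strictly increases with the stage, $\ell\to\infty$. Writing $\vec b$ for the running sum after this one step, I then invoke the L\'evy--Steinitz theorem for the series of as-yet-unused terms of index exceeding a threshold $T$ chosen so large that $\norm{\vec a_n}<\rho$ for all such $n$: a suitable initial segment of a rearrangement converging to $\langle x_i:i<d_{k+1}\rangle-\vec b$ furnishes a finite fresh set $B$, all of whose term-vectors have norm below $\rho$, with $\norm{\sum_{n\in B}\vec a_n-(\langle x_i:i<d_{k+1}\rangle-\vec b)}$ as small as I please and $|B|$ as large as I please.

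The key step is to order $B$ so that the partial sums do not stray. Let $\vec s=\sum_{n\in B}\vec a_n$ and center the vectors by $w_n=\vec a_n-\vec s/|B|$, so that $\sum_{n\in B}w_n=\mathbf 0$ and $\norm{w_n}\le\rho+\norm{\vec s}/|B|$. Applying the Polygonal Confinement Theorem to the $w_n$ (after the obvious rescaling) yields an enumeration along which every partial sum of the $w_n$ has norm at most $C_{d_{k+1}}(\rho+\norm{\vec s}/|B|)$; adding back the linear drift $\tfrac{j}{|B|}\vec s$ shows that along the same enumeration the partial sums of the $\vec a_n$ stay within $C_{d_{k+1}}(\rho+\norm{\vec s}/|B|)$ of the straight segment from $\vec b$ to $\vec b+\vec s$. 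Using this order for $B$ completes the block and fixes $m_{k+1},F_{k+1},\epsilon_{k+1}$. Crucially, $d_{k+1}$ and hence $C_{d_{k+1}}$ are pinned down before $\rho$ and $|B|$ are chosen, so I may take $\rho<2^{-k}/C_{d_{k+1}}$ and $|B|>2^{k}C_{d_{k+1}}\norm{\vec s}$, giving the whole tube radius below $2^{-k+1}$.

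Convergence then follows coordinatewise. Fix $i$; for every stage $k>i$ the running sum in coordinate $i$ begins within $\epsilon_k$ of $x_i$, is displaced by the single forced term by $|a^i_\ell|\le\norm{\vec a_\ell}$, and thereafter stays within the tube of a segment both of whose endpoints lie within $\epsilon_{k+1}$ of $x_i$. Hence for every $m$ in the block of stage $k$ one has $|\sum_{j<m}a^i_{p(j)}-x_i|\le\epsilon_k+|a^i_\ell|+2^{-k+1}$, and as $k\to\infty$ each of the three terms tends to $0$ (the middle one because $\ell\to\infty$ forces $a^i_\ell\to 0$ for the now-fixed $i$). Thus $\sum_j a^i_{p(j)}=x_i$ for all $i$, and $p$ is a bijection by construction. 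The main obstacle I expect is exactly this bookkeeping: the block terms must be kept uniformly small for confinement, yet both the drift $\vec s$ and the forced index contribute quantities that grow with the dimension, and the construction succeeds only because $C_{d_{k+1}}$ is fixed before $\rho$ and $|B|$, and because for each fixed coordinate the relevant errors decay as the stage increases.
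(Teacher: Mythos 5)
Your proposal is correct and follows essentially the same strategy as the paper's proof: it interleaves the finite-dimensional L\'evy--Steinitz theorem (to select each new finite block of terms) with the Polygonal Confinement Theorem (to order the block so partial sums stay in a small tube), while increasing the truncation dimension, driving the tolerance to $0$, and forcing the least unused index in at each stage for surjectivity --- exactly the content of the paper's partial order $P$ and density Lemma \ref{twodense}, with the forcing language replaced by a direct recursion. The only real difference is a minor technical variant in the confinement step: the paper controls the walk only in the old dimension $d$ via Lemma \ref{pctrest}, which tolerates a small nonzero block sum directly, whereas you confine in the new dimension $d_{k+1}$ by centering the block vectors and absorbing the possibly large drift $\vec{s}$ by taking $|B|$ large --- a valid alternative application of the same key lemma.
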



For the rest of this section, fix $\langle a^{i} : i < \omega \rangle$ and $\langle x_{i} : i < \omega \rangle$ as in the statement of Theorem \ref{indthrm}, and a nondecreasing sequence of constants $C_{d}$ as given by the Polygonal Confinement Theorem.
We define a partial order $P$ from which our desired permutation will be induced by a suitable descending sequence.
Conditions in $P$ are triples $(f,d,\epsilon)$ such that
\begin{itemize}
\item $f$ is an injection from some $n \in \omega$ to $\omega$;
\item $d$ is a positive integer;
\item $\epsilon$ is a positive rational number;
\item $\norm{\sum_{k < n} \langle a^{i}_{f(k)} : i < d \rangle - \langle x_{i} : i < d \rangle} < \epsilon$;
\item for all $m \in \omega \setminus \range(f)$, $\norm{\langle a^{i}_{m} : i < d \rangle} < \epsilon/C_{d}$.
\end{itemize}


The order on $P_{I}$ is defined by : $ (g, e, \delta) \leq (f, d, \epsilon)$ if
\begin{itemize}
\item $g$ extends $f$;
\item $e \geq d$;
\item for all $m \in \dom(g) + 1$, $\norm{\sum_{k \in m \setminus \dom(f)} \langle a^{i}_{g(k)} : i < d \rangle} < 2\epsilon$;
\item $2\delta + \norm{\sum_{k \in \dom(g) \setminus \dom(f)} \langle a^{i}_{g(k)} : i < d \rangle} \leq 2\epsilon$.
\end{itemize}

Observe that if $\epsilon$ is greater than both $|x_{0}|$ and $|\sup \{ C_{1}a^{0}_{m} : m \in \omega\}|$ , then $(\emptyset, 1, \epsilon)$ is a condition in $P$.  If $\langle (f_{n}, d_{n}, \epsilon_{n}) : n \in \omega \rangle$ is a descending sequence in $P$ such that
\begin{itemize}
\item $\bigcup_{n \in \omega} f_{n}$ is a permutation of $\omega$,
\item $\omega = \bigcup_{n \in \omega} d_{n}$ and
\item $\lim_{n \to \infty}\epsilon_{n} = 0$
\end{itemize}
then $\bigcup_{n\in \omega} f_{n}$ is as desired. Theorem \ref{indthrm} follows then from Lemma \ref{twodense}.

\begin{lemma}\label{twodense} For each $(f, d, \epsilon) \in P$ and each $n \in \omega$, there exists a condition
$(g, d+1, \delta) \leq (f, d, \epsilon)$ with $n \subseteq \dom(g) \cap \range(g)$ and $\delta < 1/n$.
\end{lemma}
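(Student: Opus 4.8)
The plan is to extend the injection $f$ (with $\dom(f)=n_{0}$) to a longer injection $g$ by appending a single finite block $A$ of indices drawn from $\omega\setminus\range(f)$, raising the dimension from $d$ to $d+1$ and shrinking the tolerance to a suitable $\delta$. Write $P=\langle\sum_{k<n_{0}}a^{i}_{f(k)}:i<d+1\rangle$ for the current partial sum in $\bbR^{d+1}$ and let $\beta=\norm{\sum_{k<n_{0}}\langle a^{i}_{f(k)}:i<d\rangle-\langle x_{i}:i<d\rangle}$, which by hypothesis is strictly below $\epsilon$; the positive slack $\epsilon-\beta$ is what the argument will spend. First I would fix $\delta$ to be a positive rational smaller than $1/n$, small relative to $\epsilon-\beta$, and small enough that the fourth clause of the ordering ($2\delta+\norm{\cdots}\le 2\epsilon$) and the tail clause of a dimension-$(d+1)$ condition can both be met.

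The block $A$ must do three things: cover the mandatory indices, realize the correction, and leave a small tail. For covering, note that only finitely many indices $m$ satisfy $\norm{\langle a^{i}_{m}:i<d+1\rangle}\ge\delta/C_{d+1}$, since the terms tend to $0$ coordinatewise; $A$ must contain every such large index outside $\range(f)$ together with $\{0,\dots,n-1\}\setminus\range(f)$, which secures both $n\subseteq\range(g)$ and the clause $\norm{\langle a^{i}_{m}:i<d+1\rangle}<\delta/C_{d+1}$ for all $m\notin\range(g)$. For the correction, I would apply the L\'evy--Steinitz theorem to the sequence $\langle\langle a^{i}_{m}:i<d+1\rangle:m\ge N\rangle$ of high fresh indices: since $\langle a^{i}:i<\omega\rangle$ is independent and both independence and conditional convergence survive the deletion of finitely many terms, the space $K$ of this truncated sequence is $\{\mathbf{0}\}$, so $R=\bbR^{d+1}$ and every vector of $\bbR^{d+1}$ is a limit of partial sums of some rearrangement. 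Hence a long enough finite set $H$ of fresh high indices gives $\sum_{m\in A}\langle a^{i}_{m}:i<d+1\rangle$ within $\delta$ of $\langle x_{i}:i<d+1\rangle-P$, so the final dimension-$(d+1)$ sum lands within $\delta$ of the target; taking $|A|$ large also yields $n\subseteq\dom(g)$.

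It remains to order $A$ as the tail of $g$ so that the third clause holds, i.e. every partial sum of the appended block, measured in the first $d$ coordinates only, has norm below $2\epsilon$; this is the crux. Every appended term has $\norm{\langle a^{i}_{m}:i<d\rangle}<\epsilon/C_{d}$ by the fifth clause of the original condition, and the block's total displacement $b$ in these coordinates has $\norm{b}<\beta+\delta$. A single appeal to Lemma \ref{pctrest} fails when $C_{d}>1$, since it would force $\rho\ge\norm{b}\approx\epsilon$ and hence a bound near $\epsilon(C_{d}+1)$. Instead I would reach $b$ along the interpolating path $0=S_{0},S_{1},\dots,S_{K}=b$ with $S_{l}=(l/K)b$, adding at stage $l$ a fresh sub-block whose first $d$ coordinates sum to the increment $S_{l+1}-S_{l}$ (the unconstrained new coordinate being driven to $x_{d}$ by the overall L\'evy--Steinitz choice, and the finitely many mandatory terms distributed among the stages), where $K$ is large enough that each increment has norm at most $\epsilon/C_{d}$. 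Applying Lemma \ref{pctrest} within each stage with $\rho=\epsilon/C_{d}$ keeps the within-stage excursion below $\epsilon+\norm{S_{l+1}-S_{l}}$, so the running norm stays below $\norm{S_{l}}+\epsilon+\norm{S_{l+1}-S_{l}}\le\norm{b}+\epsilon+\norm{b}/K$, which is under $2\epsilon$ precisely when $\norm{b}(1+1/K)<\epsilon$; since $\norm{b}<\beta+\delta$ and $\beta<\epsilon$, small $\delta$ and large $K$ (spending the slack $\epsilon-\beta$) secure this. The main obstacle is exactly this reconciliation: the confinement wiggle $\epsilon$ and the displacement $\approx\epsilon$ fit jointly under $2\epsilon$ only because of the strict gap in the starting condition and the freedom to subdivide the correction into many small, separately confined increments.
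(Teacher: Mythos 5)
Your proposal is correct in substance and ends at the same condition the paper builds, but it executes the crux differently, and your detour responds to a real wrinkle in the text. The paper makes a single appeal to L\'evy--Steinitz in dimension $d+1$ to get one permutation $p$ extending $f$ with limit $\langle x_{i} : i < d+1 \rangle$, truncates at a large $n_{*}$, and then applies Lemma \ref{pctrest} once, with $\rho = \eta/C_{d}$ (where $\eta < \epsilon$ uniformly bounds $C_{d}\norm{\langle a^{i}_{m} : i < d \rangle}$ off $\range(f)$) while $\norm{b}$ may be nearly $\epsilon$, to get the block bound $(\eta/C_{d})C_{d} + (\epsilon - \delta) < 2\epsilon - 2\delta$. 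As you note, this is outside the stated hypotheses of Lemma \ref{pctrest}, which demands $\norm{b} \le \rho$; the paper is implicitly invoking the asymmetric strengthening (only $\norm{v_{i}} \le \rho$, same bound $\rho C_{d} + \norm{b}$), which is true and is proved by precisely your interpolation idea applied to $K$ phantom copies of $-b/K$ rather than to actual series terms. Your version realizes the subdivision with real terms: $K$ stages, each a fresh block supplied by L\'evy--Steinitz on the (still independent, still conditionally convergent) tail, each confined by a literal application of Lemma \ref{pctrest} with $\rho = \epsilon/C_{d}$. This buys immunity from the over-stated hypothesis at the cost of repeated L\'evy--Steinitz applications and heavier bookkeeping. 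One repair is needed: your stage blocks cannot sum \emph{exactly} to the increment $S_{l+1}-S_{l}$ (finite sets of given vectors need not have subsets with prescribed sums), so each stage must settle for a sum within a small tolerance of $b/K$, with the $K$ tolerances charged against the slack in $\norm{b}(1+1/K) < \epsilon$ and against $\delta$ in the final $(d+1)$-dimensional deviation clause; your accounting already leaves room for this. Alternatively, you could package your subdivision once and for all as a strengthening of Lemma \ref{pctrest} dropping $\norm{b} \le \rho$, after which the paper's shorter single-permutation argument becomes available verbatim.
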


\begin{proof}
Let $(f, d, \epsilon)$ and $n$ be given.
By the L\'{e}vy-Steinitz theorem, there is a permutation $p$ of $\omega$ extending $f$ such that
\[\sum_{n \in \omega} \langle a^{i}_{p(n)} : i < d + 1\rangle
= \langle x_{i} : i < d + 1\rangle.\]
Let $\eta < \epsilon$ be such that $\norm{\langle a^{i}_{m} : i < d \rangle} < \eta/C_{d}$ for all $m \in \omega \setminus \dom(f)$, and
let $\delta \in \mathbb{Q}^{+}$ be smaller than both $1/n$ and $(\epsilon - \eta)/2$.
Fix $n_{*} \geq n$ such that
\begin{itemize}
\item $n \subseteq \range(p \restrict n_{*})$,
\item $\norm{\sum_{m \in n_{*} \setminus \dom(f)}\langle a^{i}_{p(m)} : i < d \rangle} < \epsilon$,
\item $\norm{\sum_{m \in n_{*}} \langle a^{i}_{p(m)} : i < d + 1\rangle - \langle x_{i} : i < d + 1 \rangle} < \delta$ and
\item $\norm{\langle a^{i}_{m} : i < d + 1 \rangle} < \delta/C_{d + 1}$ for each $m \in \omega \setminus n_{*}$.
\end{itemize}
By Lemma \ref{pctrest}, there is
an injection $g$ from $n_{*}$ to $\omega$ extending $f$, with the same range as $p \restrict n_{*}$, such that
\begin{equation*}
\begin{split}
\norm{
\langle a^{i}_{g(\dom(f))} : i < d \rangle +
\sum_{k \in m \setminus (\dom(f) + 1)}
\langle a^{i}_{g(k)} : i < d \rangle
} & \leq (\eta/C_{d}) C_{d} + (\epsilon - \delta) \\
 & < 2\epsilon - 2\delta
\end{split}
\end{equation*}
for every $m \in n_{*} + 1$. Then $(g,A, \delta)$ is as desired.
\end{proof}

\section{Arbitrary sequences}

We adapt the notation introduced in Section \ref{prelimsec} to countable sequences.
Given a sequence $\bar{a} = \langle a^{i} : i < \omega \rangle$ consisting of real-values series, we let
$K(\bar{a})$ be the set of $\langle s_{i} : i < \omega\rangle \in \bbR^{\omega}$ for which the following hold:
\begin{itemize}
\item the set $\{ i \in \omega : d_{i} \neq 0\}$ is finite;
\item the series $\sum_{i\in \omega}d_{i}a^{i}$ is absolutely convergent.
\end{itemize}
We let $R(\bar{a})$ be the orthogonal complement of $K(\bar{a})$ (i.e., the set of
vectors in $\bbR^{\omega}$ orthogonal to every element of $K(\bar{a})$). The sets $K(\bar{a})$ and $R(\bar{a})$ are each linear subspaces of $\bbR^{\omega}$. We let $S(\bar{a})$ be the set of values in $\bbR^{\omega}$ of the form $\sum_{n \in \omega} \langle a^{i}_{p(n)} : i < d \rangle$ for $p$ a permutation of $\omega$.

The following natural generalization of the L\'{e}vy-Steinitz theorem to countable sequences was first proved by Troyasnki \cite{Troyanski}.

\begin{theorem}[L\'{e}vy-Steinitz for countable sets]
If $\bar{a} = \langle a^{i} : i < \omega \rangle$ is a sequence of conditionally convergent real-valued series,
then \[S(\bar{a}) = \{ \langle \sum a^{i} : i < \omega \rangle + \bar{x} : \bar{x} \in R(\bar{a})\}.\]
\end{theorem}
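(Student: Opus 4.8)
The plan is to establish the two inclusions separately; the forward inclusion $S(\bar a)\subseteq\{\langle\sum a^{i}:i<\omega\rangle+\bar x:\bar x\in R(\bar a)\}$ is soft, while the reverse is the substantive one. For the forward inclusion I would take a permutation $p$ of $\omega$ for which $\bar s=\langle\sum_{n}a^{i}_{p(n)}:i<\omega\rangle$ is defined (each coordinate converging) and show $\bar s-\langle\sum a^{i}:i<\omega\rangle\in R(\bar a)$, using only the permutation-invariance of absolutely convergent series: for any $\bar d\in K(\bar a)$, the quantity $\sum_{n}\sum_{i}d_{i}a^{i}_{p(n)}$ equals both $\sum_{i}d_{i}\sum a^{i}$ (invariance) and, since $\bar d$ is finitely supported and every coordinate of $\bar s$ converges, $\sum_{i}d_{i}s_{i}$. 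Subtracting shows $\bar d$ is orthogonal to $\bar s-\langle\sum a^{i}:i<\omega\rangle$; as $\bar d\in K(\bar a)$ was arbitrary, the difference lies in $R(\bar a)$.

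For the reverse inclusion, fix $\bar x\in R(\bar a)$; the goal is a permutation realizing $\langle\sum a^{i}+x_{i}:i<\omega\rangle$. First I would extract an independent \emph{skeleton} $s\subseteq\omega$ by a greedy construction: processing $j=0,1,2,\dots$, put $j\in s$ exactly when there is no $\bar d\in K(\bar a)$ supported on $(s\cap j)\cup\{j\}$ with $d_{j}\neq 0$. Routine linear algebra then yields (i) $\langle a^{i}:i\in s\rangle$ is independent, since a nonzero element of $K(\bar a)$ supported on $s$ would, at the maximum $j$ of its support, contradict $j\in s$; and (ii) every $j\in\omega\setminus s$ admits, after normalizing $d_{j}=1$, scalars $\langle k_{i}:i\in s\rangle$ with finitely many nonzero such that $a^{j}+\sum_{i\in s}k_{i}a^{i}$ is absolutely convergent, i.e. an element $\bar d^{(j)}\in K(\bar a)$ with $j$-coordinate $1$ and $i$-coordinate $k_{i}$ for $i\in s$.

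Now I would apply the independent case: by Theorem \ref{indthrm} (reindexing $s$ as $\omega$, or by the finite L\'evy--Steinitz theorem when $s$ is finite, where independence makes $R$ the full space) there is a permutation $p$ of $\omega$ with $\sum_{n}a^{i}_{p(n)}=\sum a^{i}+x_{i}$ for every $i\in s$, and then verify that $p$ handles the dependent coordinates for free. For $j\notin s$, absolute convergence of $\sum_{i}d^{(j)}_{i}a^{i}$ forces $\sum_{n}\big(a^{j}_{p(n)}+\sum_{i\in s}k_{i}a^{i}_{p(n)}\big)=\sum a^{j}+\sum_{i\in s}k_{i}\sum a^{i}$ under any $p$; since each $\sum_{n}a^{i}_{p(n)}$ with $i\in s$ converges, the partial sums of $a^{j}$ under $p$ converge as well, with limit $\sum a^{j}-\sum_{i\in s}k_{i}x_{i}$. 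This equals the target $\sum a^{j}+x_{j}$ precisely when $x_{j}+\sum_{i\in s}k_{i}x_{i}=0$, that is, when $\bar x\cdot\bar d^{(j)}=0$, which holds because $\bar x\in R(\bar a)$ and $\bar d^{(j)}\in K(\bar a)$.

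The conceptual heart, and the sole place the hypothesis $\bar x\in R(\bar a)$ is consumed, is this final identity: orthogonality to $K(\bar a)$ is exactly the compatibility condition guaranteeing that the values the skeleton forces on the dependent coordinates agree with the prescribed targets. I expect the main care (rather than a genuine obstacle) to be twofold: in the greedy construction, verifying that the dependence of each $j\notin s$ involves only indices already placed in $s$, so that the skeleton is self-contained; and the convergence bookkeeping for the dependent series, where convergence of $a^{j}$ under $p$ is obtained by subtracting the convergent finite combination $\sum_{i\in s}k_{i}a^{i}$ from the permutation-invariant absolutely convergent combination.
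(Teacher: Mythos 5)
Your proof is correct and follows essentially the same route as the paper's: the same decomposition into an independent skeleton plus dependent coordinates, the same use of Theorem \ref{indthrm} on the skeleton, the same orthogonality computation $\bar x\cdot\bar d^{(j)}=0$ to settle the dependent series, and the same permutation-invariance argument for the easy inclusion. Your greedy construction of the skeleton $s$ is a welcome extra detail, since the paper merely asserts the existence of such a set $I$ without proof.
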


\begin{proof}
  For each $i \in \omega$, let $s_{i} = \sum a^{i}$.
  Let $I \subseteq \omega$ be such that $\{ a^{i} : i \in I\}$ is independent, and such that, for each $j \in \omega \setminus i$ there exist
  $c_{j} \in \bbR$ and $d^{j}_{k} \in \bbR$ $(k \in I \cap j)$ such that $\sum_{k \in I \cap j} d^{j}_{k}a^{k} + a^{j}$ is absolutely convergent, with sum $c_{j}$.

  For one direction of the desired equality, let $\langle x_{i} : i < \omega \rangle$ be in $R(\bar{a})$. We want to find a permutation $p$ of $\omega$ such that, for each $i \in \omega$, $\sum_{n \in \omega}a^{i}_{p(n)} = s_{i} + x_{i}$. By Theorem \ref{indthrm}, there is a permutation $p$ such that this equation holds for all $i \in I$. Suppose now that $j$ is in $\omega \setminus I$. Since $\langle x_{i} : i \in \omega\rangle$ is in $R(\bar{a})$, $\sum_{k \in I \cap j} x_{k}d^{j}_{k} + x_{j} = 0$. Since $\sum_{k \in I \cap j} d^{j}_{k}a^{k} + a^{j}$ is absolutely convergent with sum $c_{j}$, $\sum_{k \in I \cap j} d^{j}_{k}s_{k} + s_{j} = c_{j}$ and
\begin{equation*}
\begin{split}
\sum_{n \in \omega}a^{j}_{p(n)} & = c_{j} - \sum_{k \in I \cap j} d^{j}_{k}\sum_{n \in \omega}a^{k}_{p(n)}\\
& = c_{j} - \sum_{k \in I \cap j}d^{j}_{k}(s_{k} + x_{k}) \\
& = (c_{j} - \sum_{k \in I \cap j} d^{j}_{k}s_{k}) - \sum_{k \in I \cap j}d^{j}_{k}x_{k}\\
&  = s_{j} + x_{j}
\end{split}
\end{equation*}
as desired.

 For the other direction, let $p$ be a permutation of $\omega$ such that $\sum_{n \in \omega} a^{i}_{p(n)}$ converges for all $n \in \omega$. For each $i \in \omega$, let
  $x_{i} = \sum_{n \in \omega} a^{i}_{p(n)} - s_{i}$. We want to see that $\bar{x} = \langle x_{i} : i \in \omega \rangle$ is in $R(\bar{a})$. To do this, fix $\langle d_{i} : i < \omega \rangle$ in $\bbR^{\omega}$ such that $\{ i \in \omega : d_{i} \neq 0\}$ is a finite set $D$, and such that
  $\sum_{i \in D} d_{i}a^{i}$ is absolutely convergent with sum $e$. Then
  \begin{equation*}
\begin{split}
\langle x_{i} : i < \omega \rangle \cdot \langle d_{i} : i < \omega \rangle & = \sum_{i \in D} x_{i}d_{i} \\
 & = \sum_{i \in D}d_{i}(\sum_{n \in \omega} a^{i}_{p(n)} - \sum_{n\in \omega}a^{i}_{n}) \\
  & = (\sum_{n \in \omega} \sum_{i \in D}d_{i}a^{i}_{p(n)}) - (\sum_{n\in \omega}\sum_{i \in D}d_{i}a^{i}_{n}) \\
 & = e - e\\
 & = 0.
\end{split}
\end{equation*}
\end{proof}


\begin{thebibliography}{99}

\bibitem{BonetDefant}
J. Bonet, A. Defant,
\emph{The L\'{e}vy-Steinitz rearrangement theorem for duals of metrizable spaces},
Israel J. Math. 117 (2000), 131–156


\bibitem{Cohen}
M.P. Cohen,
\emph{The descriptive complexity of series rearrangements}, Real Anal. Exchange 38 (2012/13), no. 2, 337–352




\bibitem{rr}
A. Blass, J. Brendle, W. Brian,  J.D. Hamkins, M. Hardy, P.B. Larson,
\emph{The rearrangement number},
in preparation

\bibitem{KadetsKadets}
M.I. Kadets, V.M. Kadets,
{\bf Series in Banach Spaces},
Birkhäuser Verlag, Basel, 1997

\bibitem{Rosenthal}
P. Rosenthal,
\emph{The remarkable theorem of L\'{e}vy and Steinitz},
Amer. Math. Monthly 94 (1987), no. 4, 342–351

\bibitem{Troyanski}
S. Troyanski, \emph{Conditionally converging series and certain F-spaces} (Russian),
Teor. Funkts., Funkts. Anal. Prilozh. 5, 102-107 (1967)

\end{thebibliography}

\end{document}